\definecolor{webgreen}{rgb}{0,.5,0}
\definecolor{webbrown}{rgb}{.6,0,0}
\def\Re{\mathop{\mathrm{Re}}}
\def\eps{\varepsilon}
\def\N{{\mathbb{N}}}
\newtheorem{thm}{Theorem}
\begin{document}

\title{\bf Exponential unitary divisors}
\author{{\sc L\'aszl\'o T\'oth} and {\sc Nicu\c{s}or Minculete}}
\date{}
\maketitle

\begin{abstract}  We say that $d$ is an exponential unitary divisor
of $n=p_1^{a_1}\cdots p_r^{a_r}>1$ if $d=p_1^{b_1}\cdots p_r^{b_r}$, where $b_i$
is a unitary divisor of $a_i$, i.e.,  $b_i\mid a_i$ and $(b_i,a_i/b_i)=1$ for
every $i\in \{1,2,\ldots,r\}$. We survey properties of related arithmetical functions
and introduce the notion of exponential unitary perfect numbers.
\end{abstract}

Key Words and Phrases: unitary divisor, exponential divisor, number of divisors, sum of divisors,
Euler's function, perfect number

Mathematics Subject Classification: 11A05, 11A25, 11N37

\section{Introduction}

Let $n$ be a positive integer. We recall that a positive integer $d$
is called a unitary divisor of $n$ if $d\mid n$ and $(d,n/d)=1$.
Notation: $d\mid_{*} n$. If $n>1$ and has the prime factorization
$n=p_1^{a_1}\cdots p_r^{a_r}$, then $d\mid_{*} n$ iff
$d=p_1^{u_1}\cdots p_r^{u_r}$, where $u_i=0$ or $u_i=a_i$ for every
$i\in \{1,2,\ldots,r\}$. Also, $1\mid_{*} 1$.

Furthermore, $d$ is said to be an exponential divisor (e-divisor) of
$n=p_1^{a_1}\cdots p_r^{a_r}>1$ if $d=p_1^{e_1}\cdots p_r^{e_r}$, where $e_i \mid a_i$, for any
$i\in \{1,2,\ldots,r\}$. Notation: $d\mid_e n$. By convention
$1\mid_e 1$.

Let $\tau^{*}(n):=\sum_{d\mid_{*} n} 1$,
$\sigma^*(n):=\sum_{d\mid_{*} n} d$ and
$\tau^{(e)}(n):=\sum_{d\mid_{e} n} 1$, $\sigma^{(e)}(n):
=\sum_{d\mid_{e} n} d$ denote, as usual, the number and the sum of
the unitary divisors of $n$ and of the e-divisors of $n$,
respectively. These functions are multiplicative and one has
\begin{equation}
\tau^{*}(n)=2^{\omega(n)}, \quad \sigma^*(n)=(1+p_1^{a_1})\cdots (1+p_r^{a_r}),
\end{equation}
\begin{equation}
\tau^{(e)}(n)=\tau(a_1)\cdots \tau(a_r), \quad
\sigma^{(e)}(n)=\left(\sum_{d_1\mid a_1} p_1^{d_1}\right) \cdots
\left(\sum_{d_r\mid a_r} p_r^{d_r}\right),
\end{equation}
where $\omega(n):=\sum_{p\mid n} 1$ is the number of distinct prime
divisors of $n$ and $\tau(n):=\sum_{d\mid n} 1$ stands for the number
of divisors of $n$.

Note that if $n$ is squarefree, then $d\mid_* n$ iff $d\mid n$, and
$\tau^{*}(n)=\tau(n)$, $\sigma^{*}(n)= \sigma(n):=\sum_{d\mid n} d$.

Closely related to the concepts of unitary and exponential divisors are the unitary convolution and
the exponential convolution (e-convolution) of arithmetic functions
defined by
\begin{equation}
(f\times g)(n)=\sum_{d\mid_{*} n} f(d)g(n/d), \quad n\ge 1,
\label{unitary_convo}
\end{equation}
and by $(f\odot g)(1)=f(1)g(1)$,
\begin{equation}
(f\odot g)(n)=\sum_{b_1c_1=a_1} \dots \sum_{b_rc_r=a_r}
f(p_1^{b_1}\cdots p_r^{b_r}) g(p_1^{c_1}\cdots p_r^{c_r}), \quad
n>1, \label{e_convo}
\end{equation}
respectively.

The function $I(n)=1$ ($n\ge 1$) has inverses with respect to the
unitary convolution and e-convolution given by
$\mu^*(n)=(-1)^{\omega(n)}$ and $\mu^{(e)}(n)=\mu(a_1)\cdots
\mu(a_r)$, $\mu^{(e)}(1)=1$, respectively, where $\mu$ is the M\"obius function.
These are the unitary and exponential analogues of the M\"obius
function.

Unitary divisors (called block factors) and the unitary convolution
(called compounding of functions) were first considered by R.
Vaidyanathaswamy \cite{Vai1931}. The current terminology was
introduced by E. Cohen \cite{Coh1960,Coh1961}. The notions of
exponential divisor and exponential convolution were first defined
by M. V. Subbarao \cite{Sub1972}. Various properties of arithmetical
functions defined by unitary and exponential divisors, including the
functions $\tau^{*}$, $\sigma^*$, $\mu^*$, $\tau^{(e)}$,
$\sigma^{(e)}$, $\mu^{(e)}$ and properties of the convolutions
\eqref{unitary_convo} and \eqref{e_convo} were investigated by
several authors.

A positive integer $n$ is said to be unitary perfect if
$\sigma^{*}(n)=2n$. This notion was introduced by M. V. Subbarao and
L. J. Warren \cite{SubWar1966}. Until now five unitary perfect
numbers are known. These are $6=2\cdot 3$, $60=2^2\cdot 3\cdot 5$,
$90=2\cdot 3^2\cdot 5$, $87\, 360=2^6\cdot 3\cdot 5\cdot 7\cdot 13$
and the following number of $24$ digits: $146\, 361\, 946\, 186\,
458\, 562\, 560\, 000 = 2^{18}\cdot 3 \cdot 5^4 \cdot 7 \cdot 11
\cdot 13 \cdot 19 \cdot 37 \cdot 79 \cdot 109 \cdot 157 \cdot 313$.
It is conjectured that there are finitely many such numbers. It is
easy to see that there are no odd unitary perfect numbers.

An integer $n$ is called exponentially perfect (e-perfect) if
$\sigma^{(e)}(n)=2n$. This originates from M. V. Subbarao
\cite{Sub1972}. The smallest e-perfect number is $36=2^2\cdot 3^2$.
If $n$ is any squarefree number, then $\sigma^{(e)}(n)=n$, and $36n$ is e-perfect for
any such $n$ with $(n,6)=1$. Hence there are infinitely many
e-perfect numbers. Also, there are no odd e-perfect numbers, cf.
\cite{StrSub1974}. The squarefull e-perfect numbers under $10^{10}$
are: $2^2\cdot 3^2$, $2^3\cdot 3^2\cdot 5^2$, $2^2\cdot 3^3\cdot
5^2$, $2^4\cdot 3^2\cdot 11^2$, $2^4\cdot 3^3\cdot 5^2\cdot 11^2$,
$2^6\cdot 3^2 \cdot 7^2\cdot 13^2$, $2^7\cdot 3^2\cdot 5^2\cdot
7^2\cdot 13^2$, $2^6\cdot 3^3\cdot 5^2\cdot 7^2\cdot 13^2$. It is
not known if there are infinitely many squarefull e-perfect
numbers, see \cite[p.\ 110]{Guy2004}.

For a survey on results concerning unitary and exponential divisors
we refer to the books \cite{McC1986} and \cite{SanCrs2004}. See also
the papers \cite{Der2008,Hag1988,KatSub2003,KatWij1998,Pet,Sne2004,Tot2004a,
Tot2004b,Tot2007} and their references.

M. V. Subbarao \cite[Section 8]{Sub1972} says: ,,We finally remark that to every given convolution of
arithmetic functions, one can define the corresponding exponential convolution
and study the properties of arithmetical functions which arise
therefrom. For example, one can study the exponential unitary convolution, and in fact, the
exponential analogue of any Narkiewicz-type convolution, among others.''

While such convolutions were investigated by several authors, cf. \cite{HauRuo1997,Han1977}, it appears that
arithmetical functions corresponding to the exponential unitary convolution mentioned above were
not considered in the literature.

It is the aim of this paper to recover this lack. Combining the notions of e-divisors and unitary divisors
we consider in this paper exponential unitary divisors (e-unitary divisors). We review properties of
the corresponding $\tau$, $\sigma$, $\mu$ and Euler-type functions. It turns out that the asymptotic
behavior of these functions is similar to those of the functions $\tau^{(e)}$,
$\sigma^{(e)}$, $\mu^{(e)}$ and $\phi^{(e)}$ (the latter one will be given in Section 3). We define
the e-unitary perfect numbers, not considered before, and state some open problems.


\section{Exponential unitary divisors}

We say that $d$ is an exponential unitary divisor (e-unitary
divisor) of $n=p_1^{a_1}\cdots p_r^{a_r}>1$ if $d=p_1^{b_1}\cdots
p_r^{b_r}$, where $b_i \mid_{*} a_i$, for any $i\in
\{1,2,\ldots,r\}$. Notation: $d\mid_{e*} n$. By convention
$1\mid_{e*} 1$.

For example, the e-unitary divisors of $n=p^{12}$, with $p$ prime, are
$d=p, p^3, p^4,p^{12}$, while its e-divisors are $d=p,p^2,p^3,p^4,p^6,p^{12}$.

Let $\tau^{(e)*}(n):=\sum_{d\mid_{e*} n} 1$ and
$\sigma^{(e)*}(n):=\sum_{d\mid_{e*} n} d$ denote the number and the
sum of the e-unitary divisors of $n$, respectively. It is immediate
that these functions are multiplicative and we have
\begin{equation}
\tau^{(e)*}(n)=\tau^{*}(a_1)\cdots \tau^{*}(a_r)=
2^{\omega(a_1)+\ldots +\omega(a_r)}, \quad \sigma^{(e)*}(n)=
\left(\sum_{d_1\mid_{*} a_1} p_1^{d_1}\right) \cdots
\left(\sum_{d_r\mid_{*} a_r} p_r^{d_r}\right).
\end{equation}

If $n$ is e-squarefree, i.e., $n=1$ or $n>1$ and all the exponents
in the prime factorization of $n$ are squarefree, then $d\mid_{e*}
n$ iff $d\mid_{e} n$, and $\tau^{(e)*}(n)=\tau^{(e)}(n)$,
$\sigma^{(e)*}(n)= \sigma^{(e)}(n)$.

Note that for any $n>1$ the values $\tau^{(e)*}(n)$ and $\sigma^{(e)*}(n)$ are even.

The corresponding exponential unitary convolution (e-unitary convolution) is given by
\\ $(f\odot_* g)(1)=f(1)g(1)$,
\begin{equation}
(f\odot_* g)(n)=\sum_{\substack{b_1c_1=a_1\\(b_1,c_1)=1}} \dots
\sum_{\substack{b_rc_r=a_r\\(b_r,c_r)=1}} f(p_1^{b_1}\cdots
p_r^{b_r}) g(p_1^{c_1}\cdots p_r^{c_r}), \label{e_unitary_convo}
\end{equation}
with the notation $n=p_1^{a_1}\cdots p_r^{a_r}>1$.

The arithmetical functions form a commutative semigroup under
\eqref{e_unitary_convo} with identity $\mu^2$. A function $f$ has an
inverse with respect to the e-unitary convolution iff $f(1)\ne 0$
and $f(p_1\cdots p_k)\ne 0$ for any distinct primes
$p_1,\ldots,p_k$.

The inverse of the function $I(n)=1$ ($n\ge 1$) with respect to the
e-unitary convolution is the function
$\mu^{(e)*}(n)=\mu^*(a_1)\cdots \mu^*(a_r)=(-1)^{\omega(a_1)+\ldots
+\omega(a_r)}$, $\mu^{(e)*}(1)=1$.

These properties of convolution \eqref{e_unitary_convo} are special cases of those of a more general
convolution, involving regular convolutions of Narkiewicz-type,
mentioned in the Introduction.

Remark. It is possible to define ,,unitary exponential divisors'' (in the reverse order) in the following way.
An integer $d$ is a unitary exponential divisor (unitary e-divisor) of $n=p_1^{a_1}\cdots p_r^{a_r}>1$ if
$d\mid n$ and the integers $d$ and $n/d$ are exponentially coprime. This means that, denoting
$d=p_1^{b_1}\cdots p_r^{b_r}$, we require $d$ and $n/d$ to have the same prime factors as $n$,
i.e., $1\le b_i< a_i$, and $(b_i,a_i-b_i)=1$ for any $i\in \{1,2,\ldots,r\}$. This is fulfilled
iff $n$ is squarefull, i.e., $a_i\ge 2$ and $(b_i,a_i)=1$ for every $i\in \{1,2,\ldots,r\}$.
Hence the number of unitary e-divisors of $n>1$ is $\phi(a_1)\cdots \phi(a_r)$ ($\phi$ is Euler's function)
or $0$, according as $n$ is squarefull or not. We do not go here into other details. For exponentially
coprime integers cf. \cite{Tot2004a}.


\section{Arithmetical functions defined by exponential unitary divisors}

As noted before, the functions $\tau^{(e)*}$ and $\sigma^{(e)*}$ are
multiplicative. Also, for any prime $p$, $\tau^{(e)*}(p)=1$,
$\tau^{(e)*}(p^2)=2$, $\tau^{(e)*}(p^3)=2$, $\tau^{(e)*}(p^4)=2$,
$\tau^{(e)*}(p^5)=2$, ..., $\sigma^{(e)*}(p)=p$,
$\sigma^{(e)*}(p^2)=p+p^2$, $\sigma^{(e)*}(p^3)=p+p^3$,
$\sigma^{(e)*}(p^4)=p+p^4$, $\sigma^{(e)*}(p^5)=p+p^5$, ....
Observe that the first difference compared with the functions
$\tau^{(e)}$ and $\sigma^{(e)}$ occurs for $p^4$ (which is not
e-squarefree).

The function $\tau^{(e)*}(n)$ is identic with the function
$t^{(e)}(n)$, defined as the number of e-squarefree e-divisors of $n$
and investigated by L. T\'oth \cite{Tot2007}. According to
\cite[Th.\ 4]{Tot2007},
\begin{equation}
\sum_{n\le x} \tau^{(e)*}(n)= C_1x +C_2 x^{1/2} + {\cal
O}(x^{1/4+\eps}), \label{tau_e_unit_sum}
\end{equation}
for every $\eps >0$, where $C_1, C_2$ are constants given by
\begin{equation}
C_1:=\prod_p \left(1 + \frac1{p^2} + \sum_{a=6}^{\infty}
\frac{2^{\omega(a)}-2^{\omega(a-1)}}{p^a}\right),
\end{equation}
\begin{equation}
C_2:=\zeta(1/2) \prod_p \left(1 + \sum_{a=4}^{\infty}
\frac{2^{\omega(a)}-2^{\omega(a-1)}-2^{\omega(a-2)}+
2^{\omega(a-3)}}{p^{a/2}}\right).
\end{equation}

The error term of \eqref{tau_e_unit_sum} was improved into ${\cal
O}(x^{1/4})$ by Y.-F. S. P\'etermann \cite[Th.\ 1]{Pet} showing that
\begin{equation}
\sum_{n=1}^{\infty} \frac{t^{(e)}(n)}{n^s}
=\frac{\zeta(s)\zeta(2s)}{\zeta(4s)} H(s), \quad \Re s>1,
\end{equation}
where $H(s)=\sum_{n=1}^{\infty} \frac{h(n)}{n^s}$ is absolutely
convergent for $\Re s >1/6$.

For the maximal order of the function $\tau^{(e)*}$ we have
\begin{equation}
\limsup_{n\to \infty} \frac{\log \tau^{(e)*}(n) \log \log n}{\log n}
= \frac1{2} \log 2, \label{limsup_tau_e_unit}
\end{equation}
this is proved (for $t^{(e)}(n)$) in \cite[Th.\ 5]{Tot2007}.
\eqref{limsup_tau_e_unit} holds also for the function $\tau^{(e)}$
instead of $\tau^{(e)*}$, cf. \cite{Sub1972}.

For the maximal order of the function $\sigma^{(e)*}$ we have

\begin{thm}
\begin{equation}
\limsup_{n\to \infty} \frac{\sigma^{(e)*}(n)}{n\log \log
n}=\frac{6}{\pi^2}e^{\gamma}, \label{limsup_sigma_e_unit}
\end{equation}
where $\gamma$ is Euler's constant.
\end{thm}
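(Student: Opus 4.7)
The plan is to reduce the problem to a standard Gronwall-type maximal order argument via the multiplicativity of $f(n) := \sigma^{(e)*}(n)/n$. The key preliminary step is to establish the pointwise bound
\[
f(p^a) \;=\; \sum_{d \mid_{*} a} p^{\,d-a} \;\le\; 1 + \frac{1}{p} \qquad (p \text{ prime},\ a \ge 1),
\]
with equality iff $a = 2$. This is elementary: every proper unitary divisor of $a$ satisfies $d \le a/2$, so a geometric series estimate gives $\sum_{d \mid_{*} a,\, d<a} p^{d} \le 2 p^{a/2} \le p^{a-1}$ for $a \ge 4$ and $p \ge 2$, while the cases $a \in \{1,2,3\}$ are checked directly. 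By multiplicativity, $f(n) \le \prod_{p \mid n} (1 + 1/p)$ for every $n \ge 1$.

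For the upper bound in \eqref{limsup_sigma_e_unit}, I would apply the classical splitting of the prime divisors of $n$ at the cutoff $y = \log n$. The primes $p \mid n$ with $p \le y$ contribute at most $\prod_{p \le y}(1 + 1/p)$; writing $(1+1/p) = (1 - 1/p^2)/(1 - 1/p)$ and combining the convergent product $\prod_p (1 - 1/p^2) = 1/\zeta(2) = 6/\pi^2$ with Mertens' theorem $\prod_{p \le y}(1-1/p)^{-1} \sim e^\gamma \log y$, this is asymptotic to $(6/\pi^2)\, e^\gamma \log \log n$. The primes $p \mid n$ with $p > y$ number at most $\log n / \log y$ (each contributes at least $\log y$ to $\log n$), so their joint contribution is bounded by
\[
\exp\!\left(\frac{1}{y} \cdot \frac{\log n}{\log y}\right) \;=\; \exp\!\left(\frac{1}{\log \log n}\right) \;=\; 1 + o(1).
\]
Multiplying the two estimates gives $f(n) \le \bigl( (6/\pi^2)\, e^\gamma + o(1) \bigr) \log \log n$, establishing the $\le$ direction.

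For the matching lower bound I would exhibit the explicit sequence $n_k := (p_1 p_2 \cdots p_k)^2$, where $p_k$ denotes the $k$-th prime. Since the preliminary bound is attained at exponent $2$, we get $f(n_k) = \prod_{i \le k}(1 + 1/p_i) \sim (6/\pi^2)\, e^\gamma \log p_k$ from the same Mertens computation, while the prime number theorem in the form $\theta(p_k) \sim p_k$ gives $\log n_k = 2\theta(p_k) \sim 2p_k$, hence $\log \log n_k \sim \log p_k$. Therefore $f(n_k)/\log \log n_k \to (6/\pi^2)\, e^\gamma$, which combined with the upper bound yields the theorem. The only non-routine step is the uniform prime-power bound $f(p^a) \le 1 + 1/p$; once that is in hand, the rest is a textbook application of Mertens' theorem combined with Gronwall-style splitting, and the key feature making the constant $6/\pi^2$ appear (rather than just $e^\gamma$ as for $\sigma(n)/n$) is that in our product we collect each prime factor of $n$ only once.
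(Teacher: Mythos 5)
Your proof is correct, and it reaches the result by a more self-contained route than the paper. The paper disposes of the theorem in three lines by invoking a general maximal-order theorem of T\'oth and Wirsing: for a nonnegative multiplicative $f$ with $\varrho(p):=\sup_{\nu}f(p^{\nu})\le(1-1/p)^{-1}$ and $f(p^{e_p})\ge 1+1/p$ for some $e_p=p^{o(1)}$, one has $\limsup f(n)/\log\log n=e^{\gamma}\prod_p(1-1/p)\varrho(p)$; it then checks $\varrho(p)=1+1/p$ (attained at $a=2$), bounding $f(p^a)$ for $a\ge 2$ by comparison with $\sigma^{(e)}(p^a)/p^a$. You instead prove the same key fact $\sup_a f(p^a)=1+1/p$ directly (your geometric-series estimate using that proper unitary divisors of $a$ are at most $a/2$ is sound, and the small cases $a\le 3$ check out), and then re-derive the Gronwall machinery by hand: the cutoff at $y=\log n$ with Mertens' theorem for the upper bound, and the explicit sequence $n_k=(p_1\cdots p_k)^2$ for the lower bound --- which is exactly the choice $e_p=2$ hidden inside the cited theorem. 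What the paper's approach buys is brevity and a uniform treatment of $\sigma^{(e)}$, $\sigma^{(e)*}$ and similar functions from one lemma; what yours buys is a proof readable without the reference, at the cost of redoing a standard argument. Both hinge on the same arithmetic observation, namely that the supremum over prime powers is $1+1/p$ and is attained at the exponent $2$, which is what makes the factor $\prod_p(1-1/p)(1+1/p)=6/\pi^2$ appear.
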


\begin{proof} This is a direct consequence of the following general result of L. T\'oth and E. Wirsing
\cite[Cor.\ 1]{TotWir2003}: Let $f$ be a nonnegative real-valued
multiplicative function. Suppose that for all primes $p$ we have
$\varrho(p):=\sup_{\nu \ge 0}f(p^\nu) \le (1-1/p)^{-1}$ and that for
all primes $p$ there is an exponent $e_p=p^{o(1)}$ such that
$f(p^{e_p})\ge 1+1/p$. Then
\begin{equation}
\limsup_{n\to \infty} \frac{f(n)}{\log \log n}=e^{\gamma}\prod_p
\left(1-\frac1{p}\right) \varrho(p).
\end{equation}
Apply this for $f(n)=\sigma^{(e)*}(n)/n$. Here $f(p)=1$, $f(p^2)=1+1/p$ and for $a\ge 2$,
$f(p^a)\le \sigma^{(e)}(p^a)/p^a\le 1+1/p$. Hence $\varrho(p)=1+1/p$ and
we can choose $e_p=2$ for all $p$.
\end{proof}

\eqref{limsup_sigma_e_unit} holds also for the function
$\sigma^{(e)}$ instead of $\sigma^{(e)*}$. For the function
$\mu^{(e)*}$ one has:

\begin{thm} (i) The Dirichlet series of $\mu^{(e)*}$
is of form
\begin{equation}
\sum_{n=1}^{\infty} \frac{\mu^{(e)*}(n)}{n^s} =
\frac{\zeta(s)}{\zeta^2(2s)} W(s), \quad \Re s > 1,
\end{equation}
where $W(s):=\sum_{n=1}^{\infty} \frac{w(n)}{n^s}$ is absolutely
convergent for $\Re s > 1/4$.

(ii)
\begin{equation} \sum_{n\le x} \mu^{(e)*}(n)=C_3x+ {\cal
O}(x^{1/2}\exp(-c (\log x)^{\Delta}),
\end{equation} where
\begin{equation}
C_3:= \prod_p \left(1+\sum_{a=2}^{\infty}
\frac{(-1)^{\omega(a)}-(-1)^{\omega(a-1)}}{p^a}\right),
\end{equation}
and $\Delta<9/25= 0.36$ and $c>0$ are constants. \end{thm}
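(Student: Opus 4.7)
\medskip

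\noindent\textbf{Proof plan for Theorem 2.}

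For part (i), the function $\mu^{(e)*}$ is multiplicative, so I would start by writing the Dirichlet series as the Euler product
\[
\sum_{n=1}^{\infty}\frac{\mu^{(e)*}(n)}{n^s}=\prod_{p}F_p(s),\qquad F_p(s):=1+\sum_{a=1}^{\infty}\frac{(-1)^{\omega(a)}}{p^{as}}.
\]
The strategy is to pull out the factor $\zeta(s)/\zeta^2(2s)$ whose $p$-local factor is $(1-p^{-s})^{-1}(1-p^{-2s})^2$. Using the identity $(1-p^{-s})(1+p^{-s})=1-p^{-2s}$ one checks the algebraic identity
\[
(1-p^{-s})\bigl(1+p^{-s}-p^{-2s}-p^{-3s}\bigr)=(1-p^{-2s})^2,
\]
so defining $W(s):=\zeta^2(2s)\zeta(s)^{-1}\sum_n \mu^{(e)*}(n)n^{-s}$ gives the Euler product $W(s)=\prod_p H_p(s)$ with
\[
H_p(s)=(1-p^{-s})(1-p^{-2s})^{-2}F_p(s)=1+(1-p^{-s})(1-p^{-2s})^{-2}\sum_{a\ge 4}\frac{(-1)^{\omega(a)}}{p^{as}}.
\]
Since the first three local coefficients of $F_p$ match those of $1+p^{-s}-p^{-2s}-p^{-3s}$ exactly (because $\mu^*(1)=1,\ \mu^*(2)=\mu^*(3)=-1$), the remainder starts at $p^{-4s}$; hence $H_p(s)=1+O(p^{-4s})$ uniformly in $p$ for $\Re s>0$, giving absolute convergence of $\prod_p H_p(s)$ for $\Re s>1/4$. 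Identifying the constant $C_3$ with $\zeta(2)^{-2}W(1)$ amounts to multiplying each $(1-p^{-1})F_p(1)$ out as a telescoped geometric series, which yields precisely the product formula for $C_3$ stated in the theorem (the $a=1$ difference vanishes because $\mu^*(1)=\mu^*(0)=1$).

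For part (ii), I would apply Perron's formula (effective truncated version) to the Dirichlet series obtained in (i), with initial line $\Re s=1+1/\log x$. The integrand $\zeta(s)\zeta^{-2}(2s)W(s)x^s/s$ has a unique simple pole at $s=1$ coming from $\zeta(s)$ (recall $\zeta(2)\ne 0$ and $W$ is regular there), whose residue is $\zeta(2)^{-2}W(1)\,x=C_3 x$, yielding the main term. I then shift the contour leftwards into the strip $1/4<\Re s\le 1$ and stop just inside the Vinogradov--Korobov zero-free region for $\zeta(2s)$, i.e.\ to a line of the form $\Re s=1/2-\eta(|t|)$ where $\eta(t)\asymp(\log(|t|+2))^{-2/3}(\log\log(|t|+3))^{-1/3}$. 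No further poles are crossed; the horizontal segments and the tails of the shifted vertical contour are handled by standard mean-value and convexity estimates for $\zeta$ together with the upper bound for $\zeta(2s)^{-2}$ in the zero-free region (Walfisz-type). Combining these bounds with the factor $x^s$ produces the $x^{1/2}\exp(-c(\log x)^{\Delta})$ control with any $\Delta<9/25$.

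The routine parts are the algebraic factoring in (i) and the residue computation matching $C_3$. The main obstacle is the quantitative contour shift in (ii): the exponent $9/25$ is not automatic from a Littlewood-style zero-free region and requires the sharper Vinogradov--Korobov bounds together with Walfisz's technique for estimating integrals of $\zeta(s)^{-k}$ along contours approaching the critical line, balancing the gain from $\eta(t)$ against the horizontal growth of the integrand. I would invoke this as a packaged estimate (as is done for the classical count of squarefree numbers) rather than reprove the zero-free region from scratch.
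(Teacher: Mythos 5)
Your part (i) is correct and checks out in detail: with $u=p^{-s}$ and $\mu^{(e)*}(p^a)=\mu^*(a)=(-1)^{\omega(a)}$ the local factor is $F_p(s)=1+u-u^2-u^3+O(u^4)$, the identity $(1-u)^{-1}(1-u^2)^2=1+u-u^2-u^3$ holds exactly, so the quotient local factor is $1+O(p^{-4\sigma})$ (with coefficients growing only linearly in $a$), giving absolute convergence of $W$ for $\Re s>1/4$; and your identification of the residue constant with $C_3$ is right, the $a=1$ term vanishing because $\omega(1)=0$. Be aware that the paper itself gives no argument at all here --- it only points to the proof of the analogous statement for $\mu^{(e)}$ in T\'oth's 2007 paper --- so your write-up is necessarily more explicit than the source.

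The gap is in part (ii), and it is quantitative. If you apply a truncated Perron formula directly to $\sum_n\mu^{(e)*}(n)n^{-s}$, the truncation error is of size roughly $x\log x/T$, and since you are aiming at an error $\ll x^{1/2}\exp(-c(\log x)^{\Delta})$ you are forced to take $T\gg x^{1/2}$ (smoothing does not help: un-smoothing at scale $\delta$ costs $\delta x$, which again forces $T\asymp\delta^{-1}\gg x^{1/2}$). Hence $\log T\asymp\log x$, the admissible shift inside the Vinogradov--Korobov region for $\zeta(2s)$ is only $\eta\asymp(\log x)^{-2/3}(\log\log x)^{-1/3}$, and the resulting saving $x^{-\eta}=\exp\bigl(-c(\log x)^{1/3}(\log\log x)^{-1/3}\bigr)$ caps your exponent at $\Delta=1/3$. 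This is unlike the Mertens function, where $T$ may be taken as small as $\exp((\log x)^{\Delta})$ and the balance yields $3/5$. So your closing claim that the contour shift ``produces \dots any $\Delta<9/25$'' is not delivered by your method; since $1/3<9/25$ you do obtain \emph{some} admissible constant $\Delta<9/25$ (which arguably satisfies the statement as literally worded), but the mechanism you defer to --- ``Walfisz's technique \dots as a packaged estimate'' --- is the squarefree-number theorem with exponent $3/5$ and does not apply here without a prior reduction. The route actually underlying the cited proof is the convolution method: write $\mu^{(e)*}=g*h$ (Dirichlet convolution) with $g\leftrightarrow\zeta(s)/\zeta^2(2s)$ and $h\leftrightarrow W(s)$, note $g=1*k$ with $k$ supported on squares, $k(m^2)=(\mu*\mu)(m)$, estimate $\sum_{n\le x}g(n)$ elementarily via the hyperbola method and the known bound for $\sum_{m\le z}(\mu*\mu)(m)$, and then convolve with $h$, whose contribution is controlled by the $\Re s>1/4$ convergence of $W$; the exponent $9/25$ is inherited from the estimate quoted there. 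You should either adopt that reduction or scale back the claimed range of $\Delta$ to what the direct contour argument actually gives.
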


\begin{proof} A similar result was proved for the function $\mu^{(e)}$ in \cite[Th.\
2]{Tot2007} (with the auxiliary Dirichlet series absolutely convergent for $\Re s > 1/5$). 
The same proof works out in case of $\mu^{(e)*}$. The error term can be improved assuming the
Riemann hypothesis, cf. \cite{Tot2007}.
\end{proof}

The unitary analogue of Euler's arithmetical function, denoted by
$\phi^*$ is defined as follows. Let $(k,n)_*:=\max \{d\in \N: d\mid
k, d\mid_* n\}$ and let
\begin{equation} \label{phi*}
\phi^*(n):= \# \{k\in \N: 1\le k\le n, (k,n)_*=1\},
\end{equation}
which is multiplicative and $\phi^*(p^a)=p^a-1$ for every prime
power $p^a$ ($a\ge 1$). Why do we not consider here the greatest
common unitary divisor of $k$ and $n$? Because if we do so
the resulting function is not multiplicative and its properties are
not so close to those of Euler's function $\phi$, cf. \cite{Tot2009}.

Furthermore, for $n=p_1^{a_1}\cdots p_r^{a_r}>1$ let $\phi^{(e)}(n)$
denote the number of divisors $d$ of $n$ such that $d$ and $n$ are
exponentially coprime, i.e., $d=p_1^{b_1}\cdots p_r^{b_r}$, where
$1\le b_i\le a_i$ and $(b_i,a_i)=1$ for any $i\in \{1,\ldots,r\}$.
By convention, let $\phi^{(e)}(1)=1$. This is the exponential
analogue of the Euler function, cf. \cite{Tot2004b}. Here $\phi^{(e)}$ is multiplicative
and
\begin{equation}
\phi^{(e)}(n)=\phi(a_1)\cdots \phi(a_r), \quad n>1.
\end{equation}

We define the e-unitary Euler function in this way: for
$n=p_1^{a_1}\cdots p_r^{a_r}>1$ let $\phi^{(e)*}(n)$ denote the
number of divisors $d$ of $n$ such that $d=p_1^{b_1}\cdots
p_r^{b_r}$, where $1\le b_i\le a_i$ and $(b_i,a_i)_*=1$ for any
$i\in \{1,\ldots,r\}$. By convention, let $\phi^{(e)*}(1)=1$. Then
$\phi^{(e)*}$ is multiplicative and
\begin{equation}
\phi^{(e)*}(n)=\phi^*(a_1)\cdots \phi^*(a_r), \quad n>1.
\end{equation}

\begin{thm}
\begin{equation}
\sum_{n\le x} \phi^{(e)*}(n)=C_4x +C_5 x^{1/3} + {\cal
O}(x^{1/4+\eps}),
\end{equation}
for every $\eps >0$, where $C_4, C_5$ are constants given by
\begin{equation}
C_4:=\prod_p \left(1 + \sum_{a=3}^{\infty}
\frac{\phi^*(a)-\phi^*(a-1)}{p^a}\right),
\end{equation}
\begin{equation}
C_5:=\zeta(1/3) \prod_p \left(1 + \frac1{p^{4/3}}+
\sum_{a=5}^{\infty} \frac{\phi^*(a)-\phi^*(a-1)-\phi^*(a-3)+
\phi^*(a-4)}{p^{a/3}}\right).
\end{equation}
\end{thm}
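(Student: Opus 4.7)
The plan is to factor the Dirichlet series as $\sum_{n\ge 1} \phi^{(e)*}(n)/n^s = \zeta(s)\zeta(3s)U(s)$ with $U(s)$ absolutely convergent in $\Re s > 1/4$, and then apply the convolution method, reducing matters to a sharp estimate for the two-variable hyperbola sum $T(y):=\#\{(k,\ell)\in \N^2 : k\ell^3\le y\}$.

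First I would compute the Euler factor at a prime $p$. Since $\phi^{(e)*}$ is multiplicative with $\phi^{(e)*}(p^a)=\phi^*(a)$, this factor is $L_p(s) = 1 + \sum_{a\ge 1}\phi^*(a)p^{-as}$. Using $\phi^*(p^a)=p^a-1$ (whence $\phi^*(1)=\phi^*(2)=1$, $\phi^*(3)=2$, $\phi^*(4)=3$, $\phi^*(5)=4$), the first differences $\phi^*(a)-\phi^*(a-1)$ vanish for $a=1,2$ and equal $1$ for $a=3,4,5$; a further subtraction of the translate by $p^{-3s}$ annihilates the coefficient at $a=3$, leaving
\[
L_p(s)(1-p^{-s})(1-p^{-3s}) = 1 + \frac{1}{p^{4s}} + \frac{1}{p^{5s}} + \sum_{a\ge 6}\frac{\phi^*(a)-\phi^*(a-1)-\phi^*(a-3)+\phi^*(a-4)}{p^{as}}.
\]
The product of these local factors defines $U(s)$, absolutely convergent for $\Re s>1/4$ because its leading non-trivial term is of order $p^{-4s}$. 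Writing $u$ for the Dirichlet coefficients of $U$, this yields $\phi^{(e)*}(n)=\sum_{d\mid n}u(d)\,\tau_{1,3}(n/d)$, where $\tau_{1,3}(m):=\#\{(k,\ell):k\ell^3=m\}$.

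Next I would estimate $T(y)=\sum_{m\le y}\tau_{1,3}(m)$ by the two-variable hyperbola method. Splitting at a parameter $b$ with complementary parameter $a=y/b^3$, and applying Euler--Maclaurin to $\sum_{k\le a}k^{-1/3}$ and to the tail $\sum_{\ell>b}\ell^{-3}$, the three secondary contributions of size $y/b^{2}$ arising from the two partial sums and from the overlap term $ab=y/b^{2}$ cancel exactly, yielding
\[
T(y) = \zeta(3)\,y + \zeta(1/3)\,y^{1/3} + O\!\left(\frac{y}{b^{3}}+b\right) = \zeta(3)\,y + \zeta(1/3)\,y^{1/3} + O(y^{1/4})
\]
upon choosing $b=y^{1/4}$. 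This cancellation and the resulting error estimate constitute the only delicate point of the proof and are the principal obstacle; all the remaining steps are routine bookkeeping.

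Finally, I would insert this asymptotic into $\sum_{n\le x}\phi^{(e)*}(n)=\sum_{d\le x}u(d)\,T(x/d)$, extend the sums $\sum_d u(d)/d$ and $\sum_d u(d)/d^{1/3}$ to infinite series (the tails being negligible by absolute convergence of $U(s)$ at $s=1$ and $s=1/3$), and bound the error contribution by $\sum_{d\le x}|u(d)|(x/d)^{1/4+\eps}\ll x^{1/4+\eps}$. The main and secondary coefficients emerge as $\zeta(3)U(1)$ and $\zeta(1/3)U(1/3)$, and unwinding the local factorization above, namely $\zeta(3)U(1)=\prod_p L_p(1)(1-p^{-1})$ and $\zeta(1/3)U(1/3)=\zeta(1/3)\prod_p U_p(1/3)$, recovers exactly the Euler products displayed for $C_4$ and $C_5$.
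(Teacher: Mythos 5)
Your proof is correct and follows essentially the same route as the paper, which simply defers to the analogous argument for $\phi^{(e)}$ in \cite{Tot2004b}: factor the Dirichlet series as $\zeta(s)\zeta(3s)U(s)$ and convolve with the classical hyperbola-method asymptotic $\sum_{m\le y}\tau(1,3;m)=\zeta(3)y+\zeta(1/3)y^{1/3}+{\cal O}(y^{1/4})$. Your explicit verification that $U(s)$ converges only for $\Re s>1/4$ (rather than $1/5$ as in the $\phi^{(e)}$ case, because the coefficient at $p^{-4s}$ no longer vanishes: $\phi^*(4)-\phi^*(3)-\phi^*(1)+1=1$) correctly accounts for the error exponent $1/4+\eps$ stated here, and your unwinding of the Euler factors matches the displayed constants $C_4$ and $C_5$.
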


\begin{proof} A similar result was proved for the function $\phi^{(e)}$ in \cite[Th.\ 1]{Tot2004b},
with error term ${\cal O}(x^{1/5+\varepsilon})$, improved into
${\cal O}(x^{1/5}\log x)$ by Y.-F. S. P\'etermann \cite[Th.\ 1]{Pet}. The same proof works out
in case of $\phi^{(e)*}$.
\end{proof}

\begin{thm}
\begin{equation}
\limsup_{n\to \infty} \frac{\log \phi^{(e)*}(n) \log \log n}{\log
n}= \frac{\log 4}{5}. \label{limsupphi}
\end{equation}
\end{thm}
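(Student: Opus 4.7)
The plan is to establish matching lower and upper bounds, following the Subbarao--T\'oth scheme used for $\tau^{(e)}$ in \cite{Sub1972} and for $t^{(e)}=\tau^{(e)*}$ in \cite{Tot2007}.

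For the lower bound I would evaluate $\phi^{(e)*}$ on the sequence $n_r:=(p_1p_2\cdots p_r)^5$, where $p_i$ denotes the $i$-th prime. Multiplicativity gives $\phi^{(e)*}(n_r)=\phi^*(5)^r=4^r$, while the prime number theorem yields $\log n_r=5\vartheta(p_r)\sim 5p_r\sim 5r\log r$, and hence $\log\log n_r\sim\log r$. Therefore
\[
\frac{\log\phi^{(e)*}(n_r)\log\log n_r}{\log n_r}\sim\frac{(r\log 4)(\log r)}{5r\log r}=\frac{\log 4}{5}.
\]

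For the upper bound the central step is the pointwise inequality
\[
\phi^*(a)\le 4^{a/5}\qquad(a\ge 1),
\]
with equality precisely at $a=5$. To prove it I would first check $m-1\le 4^{m/5}$ for each prime power $m$: for $m\ge 10$ this follows immediately from $(\log m)/m<(\log 4)/5$, and for $m\in\{2,3,4,5,7,8,9\}$ it is a direct finite verification (the tightest cases being $m=5$, where equality holds, and $m=4$, where $3<4^{4/5}\approx 3.031$). Since $\phi^*$ is multiplicative and $4^{ab/5}\ge 4^{a/5}\cdot 4^{b/5}$ whenever $a,b\ge 2$ are coprime (because $ab\ge a+b$), the bound at prime powers extends to every $a\ge 1$.

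With this pointwise bound, writing $n=p_1^{a_1}\cdots p_r^{a_r}$, one gets $\log\phi^{(e)*}(n)=\sum_i\log\phi^*(a_i)\le((\log 4)/5)\sum_i a_i$, which has the correct constant but the wrong dependence on $n$. To sharpen it to $((\log 4)/5)\log n/\log\log n$ I would run the Ramanujan-type argument of \cite{Sub1972,Tot2007}: split the prime divisors of $n$ into small primes $p_i\le \log n$ and large primes $p_i>\log n$, bound each group's contribution using the pointwise inequality together with $\omega(n)\le(1+o(1))\log n/\log\log n$ (which follows from $\log n\ge\vartheta(p_{\omega(n)})\sim\omega(n)\log\omega(n)$), and argue that any deviation of the exponents from $a_i=5$ produces a strictly smaller ratio in the limit.

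The main obstacle is that the inequality $\phi^*(a)\le 4^{a/5}$ is extremely tight at $a=4$, so one must also verify that the exponent $4$ does not give a competing extremal sequence: a parallel computation with $(p_1\cdots p_r)^4$ in place of $n_r$ gives the strictly smaller limit $(\log 3)/4<(\log 4)/5$, confirming that $a=5$ is the unique optimum.
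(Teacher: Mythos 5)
Your argument is correct in substance, but it takes a genuinely different, more self-contained route than the paper. The paper simply invokes the general theorem of Suryanarayana and Sita Rama Chandra Rao \cite{SurSit1975} (quoted in its proof of this theorem): for multiplicative $F$ with $F(p^a)=f(a)$ and $f(n)={\cal O}(n^\beta)$, the limsup equals $\sup_{m\ge 1}(\log f(m))/m$; all that remains is the finite check that $\sup_m (\log\phi^*(m))/m=(\log 4)/5$, attained at $m=5$. Your pointwise inequality $\phi^*(a)\le 4^{a/5}$ is exactly that supremum computation (and your verification of it, including the reduction to prime powers via $ab\ge a+b$ and the separate treatment of $m=7,8,9$ where the crude bound $(\log m)/m$ is not yet below $(\log 4)/5$, is correct), while your lower-bound construction $(p_1\cdots p_r)^5$ and your Ramanujan/Wigert-type upper bound together amount to re-proving the relevant special case of \cite{SurSit1975} from scratch. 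What you buy is independence from the citation; what you lose is that you must actually carry out the splitting argument, and there your sketch has one imprecision worth fixing: with the cutoff $p_i\le\log n$ the ``small prime'' block contributes up to $\pi(\log n)\cdot{\cal O}(\log\log n)={\cal O}(\log n)$, which swamps the target $\frac{\log 4}{5}\cdot\frac{\log n}{\log\log n}$. You need a cutoff such as $y=(\log n)^{1-\delta}$ (or $\log n/(\log\log n)^3$), so that $\pi(y)\log\log n=o(\log n/\log\log n)$, and for the primes below the cutoff you must use the polynomial bound $\phi^*(a)\le a$ rather than $\phi^*(a)\le 4^{a/5}$ --- the exponential bound is useless precisely there (consider $n=2^a$). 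For the primes above the cutoff, $\sum a_i\le \log n/\log y$ combined with $\phi^*(a)\le 4^{a/5}$ finishes the job. Finally, your closing worry about $a=4$ giving a competing extremal sequence is moot: once $\phi^*(a)\le 4^{a/5}$ is established for all $a$, the upper bound already rules out every other exponent, so the parallel computation with $(p_1\cdots p_r)^4$ is a sanity check rather than a needed step.
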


\begin{proof}  We apply the following general
result given in \cite{SurSit1975}: Let $F$ be a multiplicative
function with $F(p^a)=f(a)$ for every prime power $p^a$, where $f$
is positive and satisfying $f(n)={\cal O}(n^\beta)$ for some fixed
$\beta >0$. Then
\begin{equation}
\limsup_{n\to \infty} \frac{\log F(n) \log \log n}{\log n} =
\sup_{m\ge 1} \frac{\log f(m)}{m}.
\end{equation}

Let $F(n)=\phi^{(e)*}(n)$, $f(a)=\phi^*(a)$, $L(m)=(\log f(m))/m$. Here $L(1)=L(2)=0$,
$L(3)=(\log 2)/3 \approx0.231$, $L(4) =(\log 3)/4\approx 0.274$, $L(5)=(\log 4)/5\approx
0.277$, $L(6)=(\log 5)/6\approx 0.268$, $L(7) =(\log 6)/7\approx 0.255$, and $L(m)\le (\log
m)/m\le (\log 8)/8\approx 0.259$ for $m\ge 8$, using that $(\log
m)/m$ is decreasing. This proves the result.
\end{proof}

\eqref{limsupphi} holds also for the function $\phi^{(e)}$ instead of $\phi^{(e)*}$, cf. \cite{Tot2004b}.

These results show that the asymptotic behavior of the functions
$\tau^{(e)*}$, $\sigma^{(e)*}$, $\mu^{(e)*}$ and $\phi^{(e)*}$ is very close to
those of the functions $\tau^{(e)}$, $\sigma^{(e)}$, $\mu^{(e)}$ and $\phi^{(e)}$.

This is confirmed also by the next result.

\begin{thm}
\begin{equation}
\sum_{n\le x} \frac{\tau^{(e)*}(n)}{\tau^{(e)}(n)}= x \prod_p
\left(1+\sum_{a=4}^{\infty} \frac{2^{\omega(a)}/\tau(a)-2^{\omega(a-1)}/\tau(a-1)}{p^a} \right)+
{\cal O}\left(x^{1/4}\log x\right).
\label{taue*/taue}
\end{equation}

A similar asymptotic formula, with the same error term, is valid also for the quotients
\\ $\sigma^{(e)*}(n)/\sigma^{(e)}(n)$ and $\phi^{(e)}(n)/\phi^{(e)*}(n)$ (in the reverse order for the last one).
\end{thm}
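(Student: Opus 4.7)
The plan is to handle all three quotients uniformly by a Dirichlet-series factorisation, spelling it out for $f(n):=\tau^{(e)*}(n)/\tau^{(e)}(n)$ and indicating the negligible changes for $g(n):=\sigma^{(e)*}(n)/\sigma^{(e)}(n)$ and $q(n):=\phi^{(e)}(n)/\phi^{(e)*}(n)$ (the last is in reverse order because $\phi^*(a)\ge\phi(a)$ forces $\phi^{(e)*}\ge\phi^{(e)}$, so $q\le 1$ is the ``right'' direction).

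The function $f$ is multiplicative with $f(p^a)=2^{\omega(a)}/\tau(a)\in(0,1]$, equality iff $a$ is squarefree; in particular $f(p)=f(p^2)=f(p^3)=1$ and $f(p^4)=2/3$. Writing the Dirichlet series as $F(s):=\sum_n f(n)n^{-s}=\zeta(s)H(s)$, the Euler factor of $H$ at $p$ collapses to
$$H_p(s)=(1-p^{-s})\sum_{a\ge 0}\frac{f(p^a)}{p^{as}}=1+\sum_{a\ge 2}\frac{f(p^a)-f(p^{a-1})}{p^{as}}=1+\sum_{a\ge 4}\frac{f(p^a)-f(p^{a-1})}{p^{as}},$$
the $a=2,3$ differences vanishing by the squarefree-exponent observation. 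Since $|f(p^a)-f(p^{a-1})|\le 1$, each factor is $1+O(p^{-4\Re s})$, so $H(s)$ is absolutely convergent on $\Re s>1/4$.

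Now write $f=1*h$, so that $h$ has Dirichlet series $H$; multiplicativity together with $h(p)=h(p^2)=h(p^3)=0$ forces $h$ to be supported on $4$-full integers, with $|h(n)|\ll 1$ pointwise on that support. Combining the classical bound $\#\{n\le x:n\text{ is }4\text{-full}\}=O(x^{1/4})$ with the Dirichlet hyperbola identity
$$\sum_{n\le x}f(n)=\sum_d h(d)\left\lfloor\frac{x}{d}\right\rfloor=xH(1)-x\sum_{d>x}\frac{h(d)}{d}+O\!\Big(\sum_{d\le x}|h(d)|\Big)$$
gives $\sum_{n\le x}f(n)=H(1)x+O(x^{1/4}\log x)$, the tail contribution being $\ll x\cdot x^{-3/4}=x^{1/4}$ (via partial summation on the $4$-full counting function) and the fractional-part term $\ll x^{1/4}$; the $\log x$ in the statement comfortably absorbs any polylogarithmic loss one incurs if $|h|$ is bounded a shade less sharply via its Euler factors. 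Identifying $H(1)$ with the Euler product on the right of \eqref{taue*/taue} is then bookkeeping, using $f(p^a)=2^{\omega(a)}/\tau(a)$.

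For $g$ and $q$ the recipe is identical once one verifies that $g(p^a)=(\sum_{b\mid_{*}a}p^b)/(\sum_{b\mid a}p^b)$ and $q(p^a)=\phi(a)/\phi^*(a)$ both lie in $(0,1]$ with equality iff $a$ is squarefree, so the $a=2,3$ Euler-factor contributions again cancel and the $a=4$ term is the first non-trivial one. I expect no genuine obstacle: the whole argument is a direct analogue of \cite[Th.\ 4]{Tot2007} and \cite[Th.\ 1]{Tot2004b}, and the only point requiring care is to confirm, for each of the three ratios, that the first non-cancelling Euler-factor contribution really does appear only at $a=4$, which then pins down the abscissa $1/4$ of absolute convergence of the auxiliary factor $H$ and thereby the $x^{1/4}\log x$ error.
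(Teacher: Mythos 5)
Your proposal is correct and follows essentially the same route as the paper: write each quotient as $1*h$ with $h=g*\mu$ multiplicative, observe that $h$ vanishes at $p,p^2,p^3$ (because the exponents $1,2,3$ are squarefree) and is therefore supported on $4$-full integers, and conclude by the standard $\sum_{d\le x}h(d)\lfloor x/d\rfloor$ estimate together with the $O(x^{1/4})$ count of $4$-full integers. The only (harmless) deviation is that you exploit $g(p^a)\in(0,1]$ to get $|h(n)|\le 1$ pointwise, which actually yields the slightly sharper error $O(x^{1/4})$, whereas the paper proves a general lemma for complex $|g|\le 1$ and must work with the bound $|h(n)|\le \ell_4(n)2^{\omega(n)}$ via the auxiliary identity $\ell_4(n)2^{\omega(n)}=\sum_{d^4e=n}\tau(d)v(e)$, which is where its $\log x$ factor comes from.
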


\begin{proof} This follows from the following general result, which may be known. Let $g$ be a complex valued
multiplicative function such that $|g(n)|\le 1$ for every $n\ge 1$
and $g(p)=g(p^2)=g(p^3)=1$ for every prime $p$. Then
\begin{equation}
\sum_{n\le x} g(n)= x \prod_p \left(1+\sum_{a=4}^{\infty} \frac{g(p^a)-g(p^{a-1})}{p^a} \right)+
{\cal O}\left(x^{1/4}\log x\right).
\label{h}
\end{equation}

To obtain \eqref{h}, which is similar to \cite[Th.\ 1]{Tot2007}, let $h=g*\mu$ in terms of the
Dirichlet convolution. Then $h$ is multiplicative, $h(p)=h(p^2)=h(p^3)=0$,
$h(p^a)=g(p^a)-g(p^{a-1})$ and $|h(p^a)|\le 2$ for every prime $p$ and every $a\ge 4$. Hence $|h(n)|\le
\ell_4(n) 2^{\omega(n)}$ for every $n\ge 1$, where $\ell_4(n)$ stands for the
characteristic function of the $4$-full integers. Note that
\begin{equation}
\ell_4(n)2^{\omega(n)}=\sum_{d^4e=n} \tau(d)v(e),
\end{equation}
where the function $v$ is given by
\begin{equation}
\sum_{n=1}^{\infty} \frac{v(n)}{n^s}=\prod_p
\left(1+\frac{2}{p^{5s}}+\frac{2}{p^{6s}}+ \frac{2}{p^{7s}} -\frac1{p^{8s}}-\frac{2}{p^{9s}}-
\frac{2}{p^{10s}} -\frac{2}{p^{11s}}\right),
\end{equation}
absolutely convergent for $\Re s>1/5$. We obtain \eqref{h} by usual estimates, cf. the proof of
\cite[Th.\ 1]{Tot2007}. \end{proof}

Note also, that $\mu^{(e)}(n)/\mu^{(e)*}(n)=|\mu^{(e)}(n)|$ is the characteristic function of the e-squarefree
integers $n$. Asymptotic formulae for $|\mu^{(e)}(n)|$ were given in \cite[Th.\ 2]{Wu1995},
\cite[Th.\ 3]{Tot2007}.


\section{Exponential unitary perfect numbers}

We call an integer $n$ exponential unitary perfect (e-unitary perfect) if $\sigma^{(e)*}(n)=2n$.

If $n$ is e-squarefree, then $n$ is e-unitary perfect iff $n$ is
e-perfect. Consider the squarefull e-unitary perfect numbers. The
first three such numbers given in the Introduction, that is
$36=2^2\cdot 3^2$, $1\,800= 2^3\cdot 3^2\cdot 5^2$ and
$2\,700=2^2\cdot 3^3\cdot 5^2$ are e-squarefree, therefore also
e-unitary perfect. It follows that there are infinitely many
e-unitary perfect numbers.

The smallest number which is e-perfect but not e-unitary perfect is $17\,424 = 2^4\cdot 3^2 \cdot 11^2$.

\begin{thm} There are no odd e-unitary perfect numbers.
\end{thm}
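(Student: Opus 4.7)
The plan is to combine a parity argument with a congruence modulo $p^2$. Suppose for contradiction that an odd integer $n = p_1^{a_1}\cdots p_r^{a_r}$ satisfies $\sigma^{(e)*}(n) = 2n$; then the $2$-adic valuation of $2n$ equals $1$.

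First I would use parity to pin down the shape of $n$. By multiplicativity, $\sigma^{(e)*}(n) = \prod_{i=1}^r \sigma^{(e)*}(p_i^{a_i})$. If $a_i = 1$ then $\sigma^{(e)*}(p_i) = p_i$ is odd. If $a_i \ge 2$ then $\sigma^{(e)*}(p_i^{a_i}) = \sum_{d \mid_{*} a_i} p_i^d$ is a sum of $\tau^*(a_i) = 2^{\omega(a_i)}$ odd numbers, and this count is even because $\omega(a_i) \ge 1$; hence the factor is even. The $2$-adic valuation of $\sigma^{(e)*}(n)$ is therefore at least the number of indices with $a_i \ge 2$, and since $2n$ has $2$-adic valuation $1$, at most one such index can occur. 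It cannot be zero either, for then $n$ would be squarefree and $\sigma^{(e)*}(n) = n \ne 2n$.

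Consequently $n = p^a m$ with $p$ an odd prime, $a \ge 2$, and $m$ odd squarefree coprime to $p$. Using $\sigma^{(e)*}(m) = m$, the perfectness equation collapses to $\sigma^{(e)*}(p^a) = 2p^a$.

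The last step is a congruence modulo $p^2$. Since $1 \mid_{*} a$ always, and every other unitary divisor of $a$ is at least $2$, one has
\begin{equation*}
\sigma^{(e)*}(p^a) \;=\; \sum_{d \mid_{*} a} p^d \;=\; p \;+\; \sum_{\substack{d \mid_{*} a \\ d \ge 2}} p^d \;\equiv\; p \pmod{p^2}.
\end{equation*}
But $2p^a \equiv 0 \pmod{p^2}$ because $a \ge 2$, so $\sigma^{(e)*}(p^a) = 2p^a$ would force $p \equiv 0 \pmod{p^2}$, which is absurd. I do not anticipate a serious obstacle; the crux is the combination of the parity fact $2 \mid \tau^*(a)$ for $a \ge 2$ with the observation that the $d = 1$ summand contributes exactly $p$ to $\sigma^{(e)*}(p^a)$, driving the mod $p^2$ contradiction.
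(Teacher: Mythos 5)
Your proof is correct. The first half --- using the parity of $\tau^*(a_i)=2^{\omega(a_i)}$ to show each factor $\sigma^{(e)*}(p_i^{a_i})$ with $a_i\ge 2$ is even, hence that at most one such index can occur, and then cancelling the squarefree part to reduce to $\sigma^{(e)*}(p^a)=2p^a$ with $a\ge 2$ --- is in substance the same reduction the paper performs (the paper first divides out the primes with $a_i=1$ and then concludes $r=1$ from parity). Where you diverge is the endgame: the paper derives the contradiction from a size estimate, namely $\sigma^{(e)*}(p^a)/p^a\le \sigma^{(e)}(p^a)/p^a\le 1+1/p<2$ for $a\ge 2$, whereas you observe that $1\mid_* a$ contributes exactly $p$ while every other unitary divisor $d$ of $a$ satisfies $d\ge 2$, so $\sigma^{(e)*}(p^a)\equiv p\pmod{p^2}$, which is incompatible with $2p^a\equiv 0\pmod{p^2}$. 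Both arguments are valid; your congruence is a bit more self-contained (it avoids verifying the bound $\sigma^{(e)}(p^a)\le p^a+p^{a-1}$) and in fact shows that $\sigma^{(e)*}(p^a)=2p^a$ is impossible for \emph{every} prime $p$ and $a\ge 2$, the oddness of $n$ being needed only in the parity step --- exactly as in the paper's inequality, which also does not really need $p\ge 3$.
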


\begin{proof} Let $n=p_1^{a_1}\cdots p_r^{a_r}$ be an odd e-unitary
perfect number. That is \begin{equation}
\sigma^{(e)*}(p_1^{a_1})\cdots \sigma^{(e)*}(p_r^{a_r})=
2p_1^{a_1}\cdots p_r^{a_r}. \label{proof_1}
\end{equation}

We can assume that $a_1,\ldots,a_r\ge 2$, i.e. $n$ is squarefull.
(if $a_i=1$ for an $i$, then $\sigma^{(e)*}(p_i)=p_i$ and we can
simplify in \eqref{proof_1} by $p_i$).

Now each $\sigma^{(e)*}(p_i^{a_i})= \sum_{d\mid_* a_i} p_i^d$ is
even, since the number of terms is $2^{\omega(a_i)}$, which is even.

From \eqref{proof_1} we obtain that $r=1$ and have
\begin{equation}
\sigma^{(e)*}(p_1^{a_1})= 2p_1^{a_1}. \label{proof_2}
\end{equation}

Using that $a_1\ge 2$,
\begin{equation}
2= \frac{\sigma^{(e)*}(p_1^{a_1})}{p_1^{a_1}}\le
\frac{\sigma^{(e)}(p_1^{a_1})}{p_1^{a_1}}\le 1+\frac1{p_1}\le
1+\frac1{3}<2
\end{equation}
is a contradiction, and the proof is complete.
\end{proof}

We state the following open problems.

Problem 1. Is there any e-unitary perfect number which is not e-squarefree,
therefore not e-perfect?

Problem 2. Is there any e-unitary perfect number which is not divisible by $3$?


\vskip1mm {\sc L\'aszl\'o T\'oth}, University of P\'ecs, Department of
Mathematics, Ifj\'us\'ag u. 6, 7624 P\'ecs, Hungary, E-mail: ltoth@gamma.ttk.pte.hu

\vskip1mm {\sc Nicu\c{s}or Minculete}, ,,Dimitrie Cantemir'' University of Bra\c{s}ov, Romania,
\\ E-mail: minculeten@yahoo.com

\end{document}